\long\def\symbolfootnote[#1]#2{\begingroup%
\def\thefootnote{\fnsymbol{footnote}}\footnote[#1]{#2}\endgroup}
\titleformat{\section}{\large\bfseries}{\thesection.}{.5em}{}
\titlespacing*{\section}{0pt}{*3}{*2}
\titleformat{\subsection}{\normalfont\bfseries}{\thesubsection.}{.5em}{}
\titlespacing*{\subsection} {0pt}{*3}{*2}
\titleformat{\subsubsection}{\normalfont\bfseries}{\thesubsubsection.}{.5em}{}
\titlespacing*{\subsubsection} {0pt}{*3}{*2}
\theoremstyle{plain} %% italic text
\newtheorem{lemma}{Lemma}[section]
\numberwithin{equation}{section} %% double numbering within sections
\newcommand{\ind}[1]{\mathbbm{1}_{\{#1\}}}   %indicator
\newcommand{\cF}{\mathscr{F}}
\newcommand{\cD}{\mathscr{D}}
\newcommand{\Real}{\mathbb{R}}
\newcommand{\Exp}{\mathsf{E}}
\newcommand{\Pro}{\mathsf{P}}
\begin{document}

\title{\textbf{\Large Minimax Optimality of Shiryaev-Roberts Procedure for Quickest Drift Change Detection of a Brownian motion}}

\date{}

\maketitle
\vspace{-2cm}
%%%%%%%%% Authors, affiliations %%%%%%%%%%%%%%%%%%%%%%%%%%

\author{
\begin{center}
%\vskip -1cm

\textbf{\large Taposh Banerjee and George V. Moustakides}
\end{center}
}

\symbolfootnote[0]{\normalsize Address correspondence to T. Banerjee, John A. Paulson School of Engineering and Applied Sciences, Harvard University, Cambridge, USA; E-mail: tbanerjee@seas.harvard.edu and G. V. Moustakides, Department of Electrical and Computer Engineering, University of Patras, 26500 Patras-Rion, Greece; E-mail: moustaki@upatras.gr}

{\small \noindent\textbf{Abstract:} 
The problem of detecting a change in the drift of a Brownian motion is considered. The change 
point is assumed to have a modified exponential prior distribution with unknown parameters. 
A worst-case analysis with respect to these parameters is adopted leading to a min-max problem formulation. Analytical and numerical justifications are provided towards establishing that the Shiryaev-Roberts procedure with a specially designed starting point is exactly optimal 
for the proposed mathematical setup. 
  
%Additional decentralized and
%centralized control based algorithms are also proposed. The
%performances of all the proposed algorithms are compared using
%simulations.
}
%%%%%%%%% Key words %%%%%%%%%%%%%%%%%%%%%%%%%%
\vspace{0.5cm}
{\small \noindent\textbf{Keywords:} Brownian motion; Drift change detection; Minimax optimality; Quickest
change detection; Shiryaev-Roberts procedure}
\\ \\
%%%%%%%%% Subject Classifications %%%%%%%%%
{\small \noindent\textbf{Subject Classifications:} 62L05; 62L10; 62L15; 62F05; 62F15; 60G40.}

%\vspace{-0.2cm}
\section{INTRODUCTION}\label{sec:Intro}
Consider a continuous-time stochastic process $\{\xi_t\}$ of the form
\begin{equation}
\label{eq:Obs}
d\xi_t = \mu dt\ind{t\geq\tau}+ dw_t, \quad t \geq 0,~~\xi_0=0,~~\mu\neq0
\end{equation}
where $\{w_t\}$ is a Wiener process and $\tau$ is a real valued random variable independent of $\{w_t\}$. 
The variable $\tau$ is to be interpreted as the change-point at which there is a change in the 
drift: for $t \leq \tau$, $\xi_t$ is a standard Brownian motion, 
and for $t > \tau$, it is a Brownian motion with a known drift $\mu \in \Real$.  
We consider the problem of detecting this change using a stopping time $T$ adapted to the filtration generated by the process $\{\xi_t\}_{t\geq0}$, 
with minimum possible delay $T-\tau$, subject to a constraint on false alarms $\{T \leq \tau\}$. To simplify our presentation, from now on without loss of generality we assume that $\mu=\sqrt{2}$. Indeed any other value of $\mu\neq0$ can be reduced to $\sqrt{2}$ just by a simple change in time scale of the process $\{\xi_t\}$ and a change in sign if $\mu<0$.

When the random variable $\tau$ with $\tau \in \Real$ has a zero-modified exponential prior
then the Bayesian version of the quickest change detection 
problem was studied by \cite{shir-siamtpa-1963} and the corresponding optimum test is known as the ``Shiryaev test''. In this work we are interested in the case where the parameters of the zero-modified exponential prior are \textit{unknown} and we follow a worst-case analysis to cope with this lack of information. Our claim is that the Shiryaev-Roberts procedure with a specially designed deterministic starting point, known as the SR-$r$ procedure (see \cite{Moustakides}), is exactly optimal for the proposed formulation. In fact we provide analytical and numerical evidence to support this claim. 

We now state the problem formulation and the main results of this paper rigorously. The observation process $\{\xi_t\}$ is as in \eqref{eq:Obs}. 
The zero-modified exponential prior for the change-point $\tau$ is such that
\begin{equation}\label{eq:prior_modexp0}
\Pro(\tau \leq 0) = \pi,~~\Pro(\tau \in dt) = (1-\pi)\lambda e^{-\lambda t}dt, ~~ t \geq 0,   
\end{equation}
for some $\lambda \geq 0$ and $\pi \in [0,1]$. The assumption of a zero-modified exponential prior is fundamental to our work, and will play a crucial role in what follows. However, for analytical convenience, it is necessary to change the corresponding parametrization. In particular we define $r=\frac{\pi}{\lambda}$ suggesting
\begin{equation}\label{eq:prior_modexp}
\Pro(\tau \leq 0) = r\lambda,~~\Pro(\tau \in dt) = (1-r\lambda)\lambda e^{-\lambda t} dt, ~~ t \geq 0,   
\end{equation}
with $\lambda\geq0$ and $\frac{1}{\lambda}\geq r\geq0$.

Regarding probability measures, we use $\Pro_t$ to denote the measure incurred, when the change-time takes upon the deterministic value $\tau=t$ and reserve $\Exp_t$ for the corresponding expectation. With this definition we have that $\Pro_\infty$ corresponds to the probability measure when all observations are under the nominal regime while $\Pro_0$ when the observations are under the alternative. Combining the previous measures with the prior on $\tau$ produces $\Pro_{r, \lambda}$ and $\Exp_{r, \lambda}$, that is, the probability measure and expectation when the change-time $\tau$ is random. 

If $\{\cF_t\}_{t\geq0}$ denotes the filtration generated by the observations, i.e.~$\cF_t=\sigma(\xi_s:0\leq s \leq t)$, with $\cF_0$ the trivial sigma-algebra then, 
for detection we seek an $\{\cF_t\}$-adapted stopping time $T$ that will detect the change in the drift as quickly as possible, subject to a constraint on the false alarm rate. 
When the pair $(r, \lambda)$ is known, \cite{shir-siamtpa-1963} proposed the following formulation\footnote{Actually to be more precise Shiryaev proposed a Bayesian version of the problem. It can be easily shown that \eqref{eq:ShirProb} can be reduced to it.}
\begin{equation}
\label{eq:ShirProb}
\inf_{T} \Exp_{r, \lambda}[T-\tau^+| T > \tau],~~~
\text{subject to:}~\Pro_{r, \lambda}(T \leq \tau) \leq \alpha,
\end{equation}
where $x^+= \max\{x, 0\}$ and $\alpha \in [0,1]$ a known false alarm probability level.

In the current work, unlike \eqref{eq:ShirProb}, we consider $(r,\lambda)$ to be \textit{unknown}. In order to deal with this lack of information we adopt a worst-case analysis with respect to the parameter pair. We therefore propose the following min-max constrained optimization alternative
\begin{equation}
\label{eq:minimaxProb}
\inf_{T}  \sup_{r, \lambda} \Exp_{r, \lambda}[T-\tau^+ | T > \tau],~~~
\text{subject to:}~\Exp_{\infty}[T] \geq \gamma,
\end{equation}
where $\gamma$ is a constant that constrains the average period of false alarms. The switching from the false alarm probability appearing in \eqref{eq:ShirProb}, to the average false alarm period adopted in \eqref{eq:minimaxProb} is common for min-max approaches (e.g. see \cite{Moustakides2}). This change is necessary since the false alarm probability in \eqref{eq:ShirProb} depends on the unknown parameter pair and would therefore require an additional worst-case analysis for the constraint. Unfortunately, the worst-case false alarm probability cannot be efficiently controlled (actually very often it takes the value 1) thus making the constrain meaningless. This is the reason why it is replaced by the average false alarm period which is independent from the unknown parameters.

To complete our introduction we need some additional definitions that are necessary for our analysis. Consider the process
\begin{equation}\label{eq:LR_u_t}
du_t = - dt + \sqrt{2} d\xi_t,~u_0=0,
\end{equation}
then from \cite{pesk-shir-book-2006}, Chapter VI, Section 22, and Girsanov's theorem (see \cite{rogersWillimans2000}) we have
$$
\frac{d\Pro_0}{d\Pro_\infty}(\cF_t)=e^{u_t},~t\geq0,
$$
and more generally for $s\geq0$
\begin{equation}
\label{eq:RadNikDeri}
\frac{d\Pro_s}{d\Pro_\infty}(\cF_t)=\left\{\begin{array}{cl}
e^{u_t-u_s}&s\leq t\\
1& s>t.
\end{array}
\right.
\end{equation}
It is clear that $e^{u_t}$ is the Radon-Nikodym derivative between the two probability measures $\Pro_0,\Pro_\infty$ limited to $\cF_t$ while $e^{u_t-u_s}$ is the Radon-Nikodym derivative between $\Pro_s,\Pro_\infty$ on the same sigma-algebra when $t\geq s$.

Consider now the following statistic which will play a key role in our analysis
\begin{equation}
R_t = e^{u_t} \left\{ r_* + \int_{0}^t e^{-u_s} ds\right\},
\label{eq:stat-R}
\end{equation}
where $r_*\geq0$ is a specially designed initial point (since $R_0=r_*$) which will be specified exactly in the sequel. Define now the following function
\begin{equation}
g(R)=e^{(r_*+\gamma)^{-1}}E_1\big((r_*+\gamma)^{-1}\big)-e^{R^{-1}}E_1\left(R^{-1}\right),
\label{eq:fun-g}
\end{equation}
where $E_1(x)=\int_x^\infty\frac{e^{-z}}{z}dz$ is the exponential integral\footnote{See \cite{Abramowitz}, Chapter 5.}, $r_*$ is the parameter we introduced in the definition of $R_t$ in \eqref{eq:stat-R} and $\gamma$ the constraint on the average false alarm period in \eqref{eq:minimaxProb}. The next lemma contains a number of interesting equalities which will be used throughout our analysis. The most important one consists in providing an alternative form for our performance measure.

\begin{lemma}\label{lem:1} If $R_t$ is as in \eqref{eq:stat-R} and $T$ an $\{\cF_t\}$-adapted stopping time, then we have the following equalities that are valid
\begin{align}
\Exp_\infty[R_T]&=r_*+\Exp_\infty[T]\label{eq:lem1.1-1}\\
\Exp_t[(T-t)^+|\cF_t]&=\Exp_t[g(R_t)-g(R_T)|\cF_t]\ind{T>t}.\label{eq:lem1.1-2}
\end{align}
Furthermore
\begin{multline}
\cD(T,r,\lambda)=\Exp_{r, \lambda}[T-\tau^+| T > \tau]\\
=\frac{r\Exp_0[g(r_*)-g(R_T)]+(1-\lambda r)\int_0^\infty\Exp_t\left[\big(g(R_t)-g(R_T)\big)\ind{T>t}\right] e^{-\lambda t}dt}{r+(1-\lambda r)\Exp_\infty\left[\int_0^T e^{-\lambda t}dt\right]}.
\label{eq:lem1.1-3}
\end{multline}
When $r=r_*$ and $\lambda=0$ then we can also write
\begin{equation}
\cD(T,r_*,0)=\frac{\Exp_\infty\left[\int_0^TR_tdt\right]}{r_*+\Exp_\infty[T]}.
\label{eq:lem1.1-4}
\end{equation}
\end{lemma}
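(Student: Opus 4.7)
The plan is to handle the four identities in order, with Itô's formula and the change-of-measure identity \eqref{eq:RadNikDeri} as the main tools. For \eqref{eq:lem1.1-1}, applying Itô to $R_t$ under $\Pro_\infty$ (where $\xi_t$ is a standard Brownian motion) gives $dR_t = dt + \sqrt{2}R_t\,d\xi_t$, so $R_t - t$ is a $\Pro_\infty$-local martingale; under the integrability afforded by the false-alarm constraint $\Exp_\infty[T]\geq\gamma$, optional stopping then delivers the identity.

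For \eqref{eq:lem1.1-2} I would first verify by direct computation, using $\tfrac{d}{dR}E_1(1/R) = e^{-1/R}/R$, that $g$ in \eqref{eq:fun-g} satisfies
\[
R^2 g''(R) + (1+2R)g'(R) = -1.
\]
Setting $h(R) = e^{1/R}E_1(1/R)$, one first checks $h(R) + R^2 h'(R) = R$ and then differentiates once more; the sign flip comes from $g$ being a constant minus $h$. Next, under $\Pro_t$ and for $s\geq t$ the observation process has drift $\sqrt{2}$, so $du_s = ds + \sqrt{2}\,d\tilde{w}_s$ (with $\tilde w$ a Brownian motion under $\Pro_t$) and hence $dR_s = (1+2R_s)\,ds + \sqrt{2}R_s\,d\tilde{w}_s$. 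Itô applied to $g(R_s)$ then collapses, via the ODE, to $dg(R_s) = -ds + \sqrt{2}R_s g'(R_s)\,d\tilde{w}_s$; integrating from $t$ to $T$ on $\{T>t\}$ and taking $\Exp_t[\,\cdot\,|\cF_t]$ kills the martingale contribution and yields \eqref{eq:lem1.1-2}.

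For \eqref{eq:lem1.1-3} I would expand $\cD(T, r, \lambda)$ using the prior \eqref{eq:prior_modexp}. The numerator splits into the $\tau\leq 0$ contribution $r\lambda\,\Exp_0[T]$ and the $\tau>0$ contribution $(1-r\lambda)\lambda\int_0^\infty e^{-\lambda t}\Exp_t[(T-t)^+]\,dt$, both of which \eqref{eq:lem1.1-2} rewrites as $g$-differences. The denominator splits as $r\lambda + (1-r\lambda)\lambda\int_0^\infty e^{-\lambda t}\Pro_\infty(T>t)\,dt$ (using $\Pro_t=\Pro_\infty$ on $\cF_t$ and $\{T>t\}\in\cF_t$), which Fubini turns into $r\lambda + (1-r\lambda)\lambda\,\Exp_\infty\bigl[\int_0^T e^{-\lambda t}\,dt\bigr]$. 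Cancelling the common factor $\lambda$ in numerator and denominator produces the claimed form.

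For \eqref{eq:lem1.1-4} I would specialize \eqref{eq:lem1.1-3} to $(r,\lambda)=(r_*,0)$ and, in parallel, evaluate $\Exp_\infty\bigl[\int_0^T R_t\,dt\bigr]$ directly from \eqref{eq:stat-R}. Writing $R_t = r_* e^{u_t} + \int_0^t e^{u_t-u_s}\,ds$ and invoking \eqref{eq:RadNikDeri} twice yields $\Exp_\infty\bigl[\int_0^T e^{u_t}\,dt\bigr]=\Exp_0[T]$ and, after swapping the order of integration, $\Exp_\infty\bigl[\int_0^T\int_0^t e^{u_t-u_s}\,ds\,dt\bigr] = \int_0^\infty \Exp_s[(T-s)^+]\,ds$. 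Applying \eqref{eq:lem1.1-2} pointwise in $s$ then matches the result with the numerator of the specialized \eqref{eq:lem1.1-3}. The main obstacle I anticipate is the careful verification of the ODE for $g$ together with the sign and index bookkeeping in the Fubini-Girsanov interchanges; integrability for the martingale and interchange arguments is controlled by the false-alarm constraint and routine localization.
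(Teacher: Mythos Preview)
Your proposal is correct and follows essentially the same route as the paper: the same ODE $(2R+1)g'(R)+R^2g''(R)=-1$ for $g$, the same It\^o/optional-sampling argument for \eqref{eq:lem1.1-2}, the same prior decomposition with $\Pro_t=\Pro_\infty$ on $\cF_t$ for \eqref{eq:lem1.1-3}, and the same Girsanov/Fubini manipulation for \eqref{eq:lem1.1-4} (the paper runs that last computation in the reverse direction, starting from an alternative formula for $\Exp_t[(T-t)^+]$ and arriving at $\Exp_\infty[\int_0^T R_s\,ds]$, but the content is identical). One minor slip: the constraint $\Exp_\infty[T]\geq\gamma$ is a \emph{lower} bound and does not by itself supply the integrability you invoke for optional stopping---you need $\Exp_\infty[T]<\infty$, which the paper, like you, tacitly assumes.
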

\begin{proof}
The proof of this lemma is presented in the Appendix.
\end{proof}

\subsection{Saddle-Point Problem}
With the help of Lemma\,\ref{lem:1} the min-max problem depicted in \eqref{eq:minimaxProb} can be equivalently expressed as
\begin{equation}
\inf_{T}\sup_{r,\lambda}\cD(T,r,\lambda),~~~\text{subject to:}~\Exp_\infty[T]\geq\gamma.
\label{eq:min-max-final}
\end{equation}
As is the case in most min-max problems, it is possible to obtain their solution by solving a simpler \textit{saddle-point} alternative\footnote{\label{foot:2} \cite{Boyd}, Section 5.1: When a saddle-point solution exists it is also the solution of the min-max problem. The opposite is not necessarily true.}. In particular we are interested in a triplet $T_*,r_*,\lambda_*=0$ such that for any $\lambda\geq0$ and $\frac{1}{\lambda}\geq r\geq0$ we have validity of the following double inequality
\begin{equation}
\cD(T,r_*,0)\geq\cD(T_*,r_*,0)\geq\cD(T_*,r,\lambda),~~~\text{subject to:}~\Exp_\infty[T]\geq\gamma.
\label{eq:final-saddle}
\end{equation}
We should point out that with $\lambda_*=0$ the exponential prior becomes a degenerate uniform.

As we mention in Footnote \ref{foot:2}, it is a well established fact that the solution to the saddle-point problem in \eqref{eq:final-saddle} is also the solution to the min-max problem in \eqref{eq:min-max-final}. We therefore focus on \eqref{eq:final-saddle}.

\section{MAIN RESULTS}
Our first goal is to specify completely the triplet $T_*,r_*,\lambda_*$. So far we have that $\lambda_*=0$. Let us now define $T_*$ in terms of $r_*$. For this to be possible we focus on the first inequality of the saddle-point problem in \eqref{eq:final-saddle} which requires $\cD(T,r_*,0)\geq\cD(T_*,r_*,0)$ for all $T$ satisfying the constraint $\Exp_\infty[T]\geq\gamma$. In fact, we realize that $T_*$ must solve the following constrained minimization problem
\begin{equation}
\inf_T\cD(T,r_*,0)=\cD(T_*,r_*,0),~~~\text{subject to:}~\Exp_\infty[T]\geq\gamma.
\label{eq:lft-saddle-point}
\end{equation}
Minimizing $\cD(T,r_*,0)$ over $T$ is straightforward and the optimum stopping time is given in the next lemma.

\begin{lemma}\label{lem:1-1}
The stopping time that solves the constrained minimization problem depicted in \eqref{eq:lft-saddle-point} is given by
\begin{equation}
T_*=\inf\{t>0: R_t\geq\gamma+r_*\}.
\label{eq:opt-T}
\end{equation}
\end{lemma}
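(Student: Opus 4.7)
The plan is to recast $\cD(T,r_*,0)$ as a ratio of $\Pro_\infty$-expectations of functionals of the diffusion $R_t$, and then close via Jensen's inequality combined with a monotonicity estimate. Combining \eqref{eq:lem1.1-4} with \eqref{eq:lem1.1-1} rewrites the objective as
\[
\cD(T,r_*,0)=\frac{\Exp_\infty\!\left[\int_0^T R_t\,dt\right]}{\Exp_\infty[R_T]},
\]
and converts the constraint $\Exp_\infty[T]\geq\gamma$ into $\Exp_\infty[R_T]\geq r_*+\gamma$. Applying It\^o to the definition \eqref{eq:stat-R} of $R_t$ with the $\Pro_\infty$-dynamics $du_t=-dt+\sqrt{2}\,dw_t$ from \eqref{eq:LR_u_t} yields the SDE $dR_t=dt+\sqrt{2}\,R_t\,dw_t$, so $R_t$ is continuous and positive with positive drift, and its infinitesimal generator acts as $LF(R)=F'(R)+R^2F''(R)$.

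The next step is to identify a function $H$ that absorbs the running cost into a boundary term. A direct calculation shows that $h(R):=e^{1/R}E_1(1/R)$ --- the kernel inside the function $g$ of \eqref{eq:fun-g} --- satisfies the first-order identity $h(R)+R^2h'(R)=R$, so its antiderivative $H(R):=\int_0^R h(s)\,ds$ obeys $LH=R$. Applying It\^o to $H(R_t)$ stopped at $T\wedge\tau_n$ with $\tau_n:=\inf\{t:R_t\geq n\}$ (to keep the stochastic integral a true martingale) and passing to the limit $n\to\infty$ via monotone convergence on the integral side and Fatou on the $H$-side delivers
\[
\Exp_\infty\!\left[\int_0^T R_t\,dt\right]\geq\Exp_\infty[H(R_T)]-H(r_*),
\]
with equality whenever $R_t$ stays bounded up to $T$, in particular for $T=T_*$.

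Two structural properties of $H$ then finish the proof. Setting $f(R):=R-h(R)$, the ODE gives $f'(R)=1-f(R)/R^2$, so $f'(R_0)=1>0$ at any positive root $R_0$ of $f$; since $f>0$ near $0$, no positive root can exist, whence $h(R)<R$ for all $R>0$, $H''(R)=h'(R)>0$, and $H$ is strictly convex. Moreover, $\phi(\beta):=(H(\beta)-H(r_*))/\beta$ is strictly increasing on $(r_*,\infty)$: one computes $\beta^2\phi'(\beta)=H(r_*)-\psi(\beta)$ with $\psi(\beta):=H(\beta)-\beta h(\beta)$, and $\psi'(\beta)=-\beta h'(\beta)<0$ together with $\psi(r_*)\leq H(r_*)$ force $\phi'(\beta)>0$. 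Combining Jensen's inequality applied to the convex $H$ with monotonicity of $\phi$ on the admissible range, every admissible $T$ satisfies
\[
\cD(T,r_*,0)\geq\phi(\Exp_\infty[R_T])\geq\phi(r_*+\gamma).
\]
Continuity of $R_t$ and positivity of its drift force $T_*<\infty$ almost surely with $R_{T_*}=r_*+\gamma$ deterministically, so both the Jensen step and the monotonicity step are tight at $T_*$, giving $\cD(T_*,r_*,0)=\phi(r_*+\gamma)$ and exhibiting $T_*$ as a minimizer. The most delicate point will be orienting the It\^o expansion correctly when discharging the martingale term: the Fatou step must preserve the lower bound on $\cD$, which is why the localization is chosen along the upward hitting times $\tau_n$ rather than through a symmetric truncation.
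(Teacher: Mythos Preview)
Your argument is correct and takes a genuinely different route from the paper. The paper does not prove the lemma in detail: it cites the discrete-time result of Polunchenko and Tartakovsky (2010) and asserts that ``the continuous time version follows a similar line of proof and uses classical optimal stopping arguments,'' merely verifying that $\Exp_\infty[T_*]=\gamma$ via \eqref{eq:lem1.1-1}. The standard route implied there linearises the ratio with a Lagrange multiplier, reducing the problem to $\inf_T\Exp_\infty\big[\int_0^T(R_t-A)\,dt\big]$ with $A=r_*+\gamma$, and then shows by a Markov/sample-path comparison that first passage to $A$ is optimal. Your approach is self-contained and avoids optimal stopping theory: you recognise that the antiderivative $H$ of $h(R)=e^{1/R}E_1(1/R)$ satisfies $LH=R$ for the $\Pro_\infty$-generator $L$, convert the running cost into a boundary term via It\^o with a Fatou inequality oriented the right way, and close with Jensen using the strict convexity of $H$ (your argument that $f(R)=R-h(R)$ has no positive root is clean). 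The monotonicity of $\phi(\beta)=(H(\beta)-H(r_*))/\beta$ then handles the constraint, and both inequalities collapse to equalities at $T_*$ because $R_{T_*}$ is deterministic. The payoff of your route is an explicit, elementary lower-bound functional saturated by $T_*$; the cost is that it leans on the specific analytic identity $h(R)+R^2h'(R)=R$, whereas the optimal-stopping route is agnostic to the fine structure of $R_t$ beyond its Markov property. One small caveat worth stating explicitly: your localisation and Fatou step implicitly restrict to stopping times with $\Exp_\infty[T]<\infty$, which is harmless since any $T$ with $\Exp_\infty[T]=\infty$ cannot improve on $T_*$ in the ratio \eqref{eq:lem1.1-4}.
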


\begin{proof} To prove this lemma we use the expression for $\cD(T,r_*,0)$ provided in \eqref{eq:lem1.1-4}. We are interested in showing that among all $T$ that satisfy the false alarm constraint $\Exp_\infty[T]\geq\gamma$ the stopping time that solves the minimization
$$
\inf_T\frac{\Exp_\infty\left[\int_0^TR_tdt\right]}{r_*+\Exp_\infty[T]}
$$
is $T_*$ defined in \eqref{eq:opt-T}. This is a known result in discrete time (see \cite{Polunchenko2010}). The continuous time version follows a similar line of proof and uses classical optimal stopping arguments. The analysis presents no special difficulties, for this reason we do not provide any further details. We only point out that $T_*$ satisfies the constraint with equality. Indeed from \eqref{eq:lem1.1-1} and since $R_{T_*}=r_*+\gamma$ we have $r_*+\gamma=\Exp_\infty[R_{T_*}]=r_*+\Exp_\infty[T_*]$ from which we conclude that $\Exp_\infty[T_*]=\gamma$.
\end{proof}

The candidate stopping time $T_*$ is specified in terms of $r_*$ which is still unknown. To define $r_*$ we make use of the second inequality in the saddle-point problem \eqref{eq:final-saddle}, namely that
$\cD(T_*,r_*,0)\geq\cD(T_*,r,\lambda)$
for all $\lambda\geq0$ and $\frac{1}{\lambda}\geq r\geq0$. Since the second inequality in \eqref{eq:final-saddle} must be true for all $\lambda\geq0$ it must certainly be valid for $\lambda=0$. This implies that $r_*$ must be such that for any $r\geq0$ we have $\cD(T_*,r_*,0)\geq\cD(T_*,r,0)$. In other words $r_*$ must maximize $\cD(T_*,r,0)$ over $r$. In \eqref{eq:lem1.1-3} substituting $T=T_*$, $\lambda=0$, recalling that $R_{T_*}=r_*+\gamma$ and $g(R_{T_*})=g(r_*+\gamma)=0$, we can write
\begin{multline*}
\cD(T_*,r,0)
=\frac{rg(r_*)+\int_0^\infty\Exp_t\left[g(R_t)\ind{T_*>t}\right]dt}{r+\Exp_\infty[T_*]}\\
=\frac{rg(r_*)+\int_0^\infty\Exp_\infty\left[g(R_t)\ind{T_*>t}\right]dt}{r+\Exp_\infty[T_*]}
=\frac{rg(r_*)+\Exp_\infty\left[\int_0^{T_*}g(R_t)dt\right]}{r+\gamma}
\end{multline*}
where the second equality is due to the fact that $g(R_t)\ind{T_*>t}$ is $\cF_t$-measurable and on $\cF_t$ we know that $\Pro_t$ coincides with $\Pro_\infty$.
To maximize $\cD(T_*,r,0)$ over $r$, we observe in the last ratio both, that the numerator and the denominator are linear functions of $r$, therefore the ratio is maximized either for $r=0$ or $r=\infty$. In order for the maximum to be attained by any other value between these two extremes we need
\begin{equation}
g(r_*)=\frac{\Exp_\infty\left[\int_0^{T_*}g(R_t)dt\right]}{\gamma},~~\text{or equivalently}~~
\Exp_\infty\left[\int_0^{T_*}\big(g(R_t)-g(r_*)\big)dt\right]=0,
\label{eq:def-rstar}
\end{equation}
where for the last equation we used the fact that $\Exp_\infty[T_*]=\gamma$.
Condition \eqref{eq:def-rstar} is the equation through which we can compute $r_*$. Interestingly the same condition also assures that $\cD(T_*,r,0)=g(r_*)$ i.e.~that $\cD(T_*,r,0)$ is constant independent from $r$, namely, an \textit{equalizer} over $r$.

Summarizing: For the solution of the min-max problem in \eqref{eq:minimaxProb} we propose the candidate stopping time $T_*$ defined in \eqref{eq:opt-T}, where the parameter $r_*$ is obtained by solving \eqref{eq:def-rstar}. Regarding \eqref{eq:def-rstar}, in the next section we offer a more analytic expression.

\subsection{Optimality of the Proposed Test}
The optimality of our candidate stopping time is assured if we can show that the two inequalities in the saddle-point problem \eqref{eq:final-saddle} are true. We note that $T_*$ was constructed so that the first inequality is valid for all $T$ satisfying the false alarm constraint. Regarding the second inequality, by selecting $r_*$ through \eqref{eq:def-rstar} we guarantee $g(r_*)=\cD(T_*,r,0)$ for all $r\geq0$. However, for optimality we need to demonstrate the stronger version
\begin{equation}
g(r_*)\geq\cD(T_*,r,\lambda).
\label{eq:2nd-ineq-1}
\end{equation}
The next lemma presents a condition that can replace \eqref{eq:2nd-ineq-1} and it is easier to verify.

\begin{lemma}\label{lem:3}
The inequality in \eqref{eq:2nd-ineq-1} is equivalent to
\begin{equation}\label{eq:2nd-ineq-2}
\Exp_\infty\left[ \int_0^{T_*} e^{-\lambda t} \big(g(R_t) - g(r_*)\big) dt\right] \leq 0,~\forall \lambda\geq0.
\end{equation}
\end{lemma}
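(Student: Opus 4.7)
The plan is to reduce the inequality $g(r_*) \ge \cD(T_*,r,\lambda)$ to an algebraically equivalent condition by substituting $T=T_*$ into the explicit formula \eqref{eq:lem1.1-3} for $\cD(T,r,\lambda)$, exploiting the defining property of $T_*$ and the $\cF_t$-measurability used earlier in the derivation of \eqref{eq:def-rstar}.

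First I would substitute $T=T_*$ into \eqref{eq:lem1.1-3}. Since $R_{T_*}=r_*+\gamma$ and, by definition of $g$, $g(r_*+\gamma)=0$, the terms $\Exp_0[g(r_*)-g(R_{T_*})]$ and $\Exp_t[(g(R_t)-g(R_{T_*}))\ind{T_*>t}]$ simplify to $g(r_*)$ and $\Exp_t[g(R_t)\ind{T_*>t}]$ respectively. Next, since $g(R_t)\ind{T_*>t}$ is $\cF_t$-measurable and $\Pro_t$ coincides with $\Pro_\infty$ on $\cF_t$, we can replace $\Exp_t$ by $\Exp_\infty$, and then apply Fubini to swap the $t$-integral with $\Exp_\infty$. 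This yields
\begin{equation*}
\cD(T_*,r,\lambda)=\frac{rg(r_*)+(1-\lambda r)\Exp_\infty\!\left[\int_0^{T_*} e^{-\lambda t}g(R_t)\,dt\right]}{r+(1-\lambda r)\Exp_\infty\!\left[\int_0^{T_*} e^{-\lambda t}\,dt\right]}.
\end{equation*}

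Second, I would observe that the denominator is strictly positive. Indeed, the constraint $0\le r\le 1/\lambda$ forces $1-\lambda r\ge 0$, and since $R_0=r_*<r_*+\gamma$ (for $\gamma>0$) we have $T_*>0$, hence $\Exp_\infty[\int_0^{T_*} e^{-\lambda t}dt]>0$; so even when $r=0$ the denominator is positive. Consequently, the inequality $g(r_*)\ge\cD(T_*,r,\lambda)$ is equivalent to clearing the denominator, which after the $rg(r_*)$ term cancels on both sides reduces to
\begin{equation*}
(1-\lambda r)\,\Exp_\infty\!\left[\int_0^{T_*} e^{-\lambda t}\big(g(r_*)-g(R_t)\big)\,dt\right]\ge 0.
\end{equation*}

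Third, I would argue the two directions of the equivalence. For the forward direction, if the bound in \eqref{eq:2nd-ineq-2} holds for every $\lambda\ge 0$, multiplying by $(1-\lambda r)\ge 0$ (valid since $r\le 1/\lambda$) gives $g(r_*)\ge\cD(T_*,r,\lambda)$. For the converse, fix $\lambda\ge 0$ and choose any $r$ with $0<r<1/\lambda$ (and $r$ arbitrary for $\lambda=0$); then $1-\lambda r>0$ and we may divide, and since the resulting expectation is independent of $r$, the condition \eqref{eq:2nd-ineq-2} follows for that $\lambda$. The boundary $r=1/\lambda$ is handled separately: the formula above yields $\cD(T_*,1/\lambda,\lambda)=g(r_*)$ directly, so the inequality degenerates to equality and adds no information.

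The main obstacle is really only the bookkeeping: making sure the positivity of the denominator and the sign of $1-\lambda r$ are correctly tracked so that the division is legitimate in both directions, and that the degenerate boundary $\lambda r=1$ (where the prior puts all mass on $\{\tau\le 0\}$) does not cause trouble. Everything else is routine algebraic manipulation on top of \eqref{eq:lem1.1-3}.
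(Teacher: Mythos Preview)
Your proposal is correct and follows essentially the same approach as the paper: substitute $T=T_*$ into \eqref{eq:lem1.1-3}, use $g(R_{T_*})=g(r_*+\gamma)=0$, replace $\Exp_t$ by $\Exp_\infty$ via $\cF_t$-measurability, then clear the (positive) denominator and cancel the $rg(r_*)$ term, treating the boundary $r=1/\lambda$ separately. The paper's proof is terser and leaves the positivity of the denominator and the two directions of the equivalence implicit, but the argument is the same.
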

\begin{proof} The proof is simple. Replacing $T$ with $T_*$ in the definition of $\cD(T,r,\lambda)$ in \eqref{eq:lem1.1-3} and using the boundary condition $g(R_{T_*})=g(r_*+\gamma)=0$ we conclude that \eqref{eq:2nd-ineq-1} is true iff
$$
g(r_*)\geq\frac{rg(r_*)+(1-\lambda r)\Exp_\infty\left[\int_0^{T_*}e^{-\lambda t}g(R_t)dt\right]}{r+(1-\lambda r)\Exp_\infty\left[\int_0^{T_*}e^{-\lambda t}dt\right]},
$$
is valid for all $\lambda\geq0$ and $\frac{1}{\lambda}\geq r\geq0$. The above inequality is clearly equivalent to \eqref{eq:2nd-ineq-2} for $\frac{1}{\lambda}> r\geq0$, while it is trivially valid when $r=\frac{1}{\lambda}$.
\end{proof}

The next lemma provides a differential equation and suitable conditions for the computation of the left hand side expectation in \eqref{eq:2nd-ineq-2}.

\begin{lemma}\label{lem:4} Fix $\lambda\geq0$, if $f_{\lambda}(R)$ is a twice differentiable function of $R$ which is the solution of the ode
\begin{equation}
  - \lambda f_{\lambda}(R) + f_{\lambda}'(R) + R^2 f_{\lambda}''(R) = - \big(g(R) - g(r_*)\big)=
  e^{R^{-1}}E_1\left(R^{-1}\right)-e^{r_*^{-1}}E_1\left(r_*^{-1}\right),
\label{eq:funct-f}
\end{equation}
with $f_{\lambda}(R)$ bounded when $R\in[0,r_*+\gamma]$ and $f_\lambda(r_*+\gamma,\lambda)=0$, then
\begin{equation}
f_\lambda(r_*)=\Exp_\infty\left[ \int_0^{T_*} e^{-\lambda t} \big(g(R_t) - g(r_*)\big) dt\right].
\label{eq:fr}
\end{equation}
\end{lemma}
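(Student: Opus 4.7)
The plan is to recognize the claim as a Feynman–Kac representation for the stopped diffusion $R_t$, killed on exit from the interval $[0,r_*+\gamma]$ at the upper boundary with discount rate $\lambda$ and source term $g(R)-g(r_*)$. So the workflow has three stages: derive the $\Pro_\infty$-dynamics of $R_t$ (and read off its generator), use Itô to turn the candidate equation into a martingale identity, and then take expectations at $T_*$.

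First I would compute the SDE satisfied by $R_t$ under $\Pro_\infty$. From \eqref{eq:LR_u_t} we have $du_t=-dt+\sqrt{2}\,d\xi_t$ and $\Pro_\infty$ makes $\xi_t$ a standard Brownian motion, so $(du_t)^2=2\,dt$. Writing $R_t=e^{u_t}(r_*+V_t)$ with $V_t=\int_0^t e^{-u_s}\,ds$, Itô's formula yields
\begin{equation*}
dR_t=R_t\,du_t+\tfrac12 R_t(du_t)^2+e^{u_t}\,dV_t=R_t(-dt+\sqrt{2}\,d\xi_t)+R_t\,dt+dt=dt+\sqrt{2}\,R_t\,d\xi_t.
\end{equation*}
Thus the infinitesimal generator of $R_t$ is $\mathcal{L}\phi(R)=\phi'(R)+R^2\phi''(R)$, which is exactly the operator appearing on the left-hand side of \eqref{eq:funct-f}.

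Next I would introduce the candidate process
\begin{equation*}
M_t=e^{-\lambda t}f_\lambda(R_t)+\int_0^t e^{-\lambda s}\big(g(R_s)-g(r_*)\big)\,ds
\end{equation*}
and apply Itô once more. Grouping drift terms, the $dt$-coefficient of $dM_t$ is precisely
\begin{equation*}
e^{-\lambda t}\bigl[-\lambda f_\lambda(R_t)+f_\lambda'(R_t)+R_t^2 f_\lambda''(R_t)+g(R_t)-g(r_*)\bigr],
\end{equation*}
which vanishes by the postulated ODE \eqref{eq:funct-f}. Hence $M_t$ is a local martingale, and its martingale part is $\sqrt{2}\,R_t f_\lambda'(R_t)e^{-\lambda t}\,d\xi_t$.

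Finally I would upgrade to a true martingale on $[0,T_*]$ and apply optional stopping. On this time interval $R_t\in[0,r_*+\gamma]$, and by hypothesis $f_\lambda$ is bounded there; the same applies to $f_\lambda'$ via standard ODE regularity, so the integrand of the stochastic integral is bounded on $[0,T_*]$. Since $\Exp_\infty[T_*]=\gamma<\infty$ (noted in the proof of Lemma\,\ref{lem:1-1}), the stopped process $\{M_{t\wedge T_*}\}$ is a uniformly integrable martingale. Taking expectations gives $\Exp_\infty[M_{T_*}]=M_0=f_\lambda(r_*)$; then $R_{T_*}=r_*+\gamma$ and the boundary condition $f_\lambda(r_*+\gamma)=0$ collapse $M_{T_*}$ to the integral on the right side of \eqref{eq:fr}, yielding the identity.

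The only real obstacle is the martingale-vs-local-martingale issue, i.e.\ justifying that the stochastic integral has zero expectation at $T_*$; this is handled by the boundedness of $R$, $f_\lambda$, $f_\lambda'$ on $[0,r_*+\gamma]$ together with $\Exp_\infty[T_*]<\infty$. The ODE calculation and boundary substitution are then entirely mechanical.
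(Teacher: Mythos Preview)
Your proposal is correct and follows essentially the same route as the paper: apply It\^o's formula to $e^{-\lambda t}f_\lambda(R_t)$ under $\Pro_\infty$ (using the SDE $dR_t=dt+\sqrt{2}R_t\,dw_t$ already recorded as \eqref{eq:dR-infty}), invoke the ODE \eqref{eq:funct-f} to kill the drift, and then evaluate at $T_*$ with the boundary data $R_0=r_*$, $R_{T_*}=r_*+\gamma$, $f_\lambda(r_*+\gamma)=0$. Your presentation is a touch more careful than the paper's in explicitly addressing the local-versus-true martingale issue, but the argument is the same.
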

\begin{proof}
The proof is detailed in the Appendix.
\end{proof}

An analytic form for $f_0(R)$ (i.e. $f_\lambda(R)$ when $\lambda=0$), and how this function can be used in order to obtain an integral instead of a differential equation for $f_\lambda(R)$ when $\lambda>0$ is presented in the next lemma.

\begin{lemma}\label{lem:5} If $f_\lambda(R)$ is as in Lemma~\ref{lem:4} then for $\lambda=0$ the corresponding function $f_0(R)$ is equal to
\begin{equation}
f_0(R)=\{1-e^{r_*^{-1}}E_1(r_*^{-1})\}(R-r^*-\gamma)+\int_{(r^*+\gamma)^{-1}}^{R^{-1}}E_1(x)d\big(E_i(x)\big),
\label{eq:f0R}
\end{equation}
while $f_\lambda(R)$ when $\lambda>0$ satisfies the following integral equation
\begin{multline}
f_\lambda(R)=f_0(R)\\
-\lambda\Bigg\{
\left(E_i\big((r^*+\gamma)^{-1}\big)-e^{(r^*+\gamma)^{-1}}(r^*+\gamma)-E_i(R^{-1})+e^{R^{-1}}R\right)\int_{(r^*+\gamma)^{-1}}^{\infty}f_\lambda(z^{-1})d(e^{-z})\\
-\int_{(r^*+\gamma)^{-1}}^{R^{-1}}f_\lambda(z^{-1})d\big(e^{-z}E_i(z)\big)
+\left(E_i(R^{-1})-e^{R^{-1}}R\right)
\int_{(r^*+\gamma)^{-1}}^{R^{-1}} f_\lambda(z^{-1})d(e^{-z})
\Bigg\}
\label{eq:flambda}
\end{multline}
where $E_i(x)=\int_{-\infty}^x\frac{e^{z}}{z}dz$ is the second version of the exponential integral\footnote{See \cite{Abramowitz}, Chapter 5.}.
\end{lemma}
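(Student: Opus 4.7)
The plan is to handle the case $\lambda=0$ by direct ODE integration and then bootstrap to $\lambda>0$ by treating the zero-order term $\lambda f_\lambda$ as an additional forcing, which converts the ODE into a Volterra-type integral equation.

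For $f_0$, the ODE in Lemma~\ref{lem:4} collapses to a first-order linear equation for $v=f_0'$:
\[
R^2 v'(R) + v(R) = e^{R^{-1}}E_1(R^{-1}) - e^{r_*^{-1}}E_1(r_*^{-1}).
\]
I would perform the change of variable $x=1/R$, under which this becomes $u'(x)-u(x)=e^{r_*^{-1}}E_1(r_*^{-1})-e^{x}E_1(x)$ with $u(x)=v(1/x)$, multiply by the integrating factor $e^{-x}$, and integrate using the elementary primitive $\int E_1(z)\,dz=zE_1(z)-e^{-z}$. The boundedness requirement on $f_0$ over $[0,r_*+\gamma]$ forces the homogeneous mode $C\,e^{1/R}$ to vanish (equivalently, one takes the reference point $x_0\to\infty$ when integrating), yielding
\[
f_0'(R) = \bigl(1 - e^{r_*^{-1}}E_1(r_*^{-1})\bigr) - \frac{e^{1/R}E_1(1/R)}{R}.
\]
A second integration from $r_*+\gamma$ down to $R$, anchored by $f_0(r_*+\gamma)=0$, produces the linear term $\{1-e^{r_*^{-1}}E_1(r_*^{-1})\}(R-r_*-\gamma)$, while the remaining piece is recast via the same substitution $x=1/R'$, using $\frac{e^x}{x}\,dx=d(E_i(x))$, to obtain $\int_{(r_*+\gamma)^{-1}}^{R^{-1}}E_1(x)\,d(E_i(x))$, i.e.~equation \eqref{eq:f0R}.

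For $\lambda>0$, the ODE is structurally identical up to replacing the forcing by $\lambda f_\lambda(R)-(g(R)-g(r_*))$. Repeating the integrating-factor step on $f_\lambda'$ (again eliminating the exploding homogeneous mode via boundedness at $R=0$) gives
\[
f_\lambda'(R) = f_0'(R) + \lambda\, e^{1/R}\int_{1/R}^{\infty} e^{-z} f_\lambda(1/z)\,dz,
\]
and a further integration with $f_\lambda(r_*+\gamma)=0$ yields $f_\lambda(R)=f_0(R)+\lambda\, I(R)$, where $I(R)$ is a double integral over $(R',z)$. I would then change the outer variable to $x=1/R'$, apply Fubini by splitting the $z$-range at $R^{-1}$, and evaluate the inner $x$-antiderivative via $\int \frac{e^x}{x^2}\,dx=-\frac{e^x}{x}+E_i(x)$. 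Rewriting $e^{-z}\,dz=-d(e^{-z})$ and using the product-rule identity
\[
\Bigl(E_i(z)-\frac{e^z}{z}\Bigr)\,d(e^{-z})=d\bigl(e^{-z}E_i(z)\bigr),
\]
which follows from $d(e^{-z}E_i(z))=E_i(z)\,d(e^{-z})+\frac{dz}{z}$ together with the cancellation $-\tfrac{e^z}{z}\,d(e^{-z})=\frac{dz}{z}$, converts the double integral into the three-term form appearing in \eqref{eq:flambda}.

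The main obstacle will be the bookkeeping in this last step. After Fubini one obtains two pieces, over $[(r_*+\gamma)^{-1},R^{-1}]$ and $[R^{-1},\infty)$, both carrying a common constant $E_i((r_*+\gamma)^{-1})-(r_*+\gamma)e^{(r_*+\gamma)^{-1}}$; one must rewrite $\int_{R^{-1}}^\infty=\int_{(r_*+\gamma)^{-1}}^\infty-\int_{(r_*+\gamma)^{-1}}^{R^{-1}}$ and then combine like terms so that the coefficients of the three resulting integrals collapse to the prefactors $E_i((r_*+\gamma)^{-1})-(r_*+\gamma)e^{(r_*+\gamma)^{-1}}-E_i(R^{-1})+R\,e^{R^{-1}}$ and $E_i(R^{-1})-R\,e^{R^{-1}}$ displayed in \eqref{eq:flambda}. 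Beyond this sign/constant accounting, every step is routine ODE or calculus manipulation.
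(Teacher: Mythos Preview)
Your proposal is correct and follows essentially the same route as the paper: both use the integrating factor $e^{-R^{-1}}$ (equivalently, multiplying the ODE by $e^{-R^{-1}}R^{-2}$), the substitution $x=R^{-1}$, the antiderivative $\int\frac{e^x}{x^2}\,dx=E_i(x)-\tfrac{e^x}{x}$, a Fubini split of the $z$-range at $R^{-1}$, and the identity underlying $d\bigl(e^{-z}E_i(z)\bigr)$ to collapse the double integral into the three displayed terms. The only real difference is cosmetic: for $f_0$ the paper simply verifies \eqref{eq:f0R} by direct substitution into the ODE, whereas you derive it constructively; the subsequent manipulations for $\lambda>0$ match step for step, including the bookkeeping you flag as the main obstacle.
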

\begin{proof}
The details of the proof are presented in the Appendix.
\end{proof}

The function $f_0(R)$ enjoys an additional notable property. Comparing \eqref{eq:fr} with \eqref{eq:def-rstar} we observe that we can recover the expectation in \eqref{eq:def-rstar} by computing $f_0(r_*)$. This suggests that the corresponding equation can be written as $f_0(r_*)=0$. Using \eqref{eq:f0R} and substituting $R=r_*$ we obtain the final form of the equation which identifies $r_*$ and replaces \eqref{eq:def-rstar}
\begin{equation}
f_0(r_*)=-\gamma\{1-e^{r_*^{-1}}E_1(r_*^{-1})\}+\int_{(r_*+\gamma)^{-1}}^{r_*^{-1}}E_1(x)d\big(E_i(x)\big)=0.
\label{eq:det-rstar-final}
\end{equation}

To complete the proof of optimality for $T_*$ we need to establish the validity of \eqref{eq:2nd-ineq-2} which, because of $\eqref{eq:fr}$, is equivalent to showing that
\begin{equation}
f_\lambda(r_*)\leq0
\label{eq:opt-cond-final}
\end{equation}
where $f_\lambda(R)$ satisfies the integral equation in \eqref{eq:flambda}. Unfortunately this last step \textit{was not possible to demonstrate analytically}. Therefore we state the following claim:
\vskip0.3cm
\noindent\textbf{Conjecture.} \textit{The inequality $f_\lambda(r_*)\leq0$ is true for all $\lambda\geq0$.}
\vskip0.3cm

\noindent The validity of this claim establishes exact optimality of the candidate stopping time $T_*$ defined in \eqref{eq:opt-T} in the sense that it is min-max optimum according to the problem proposed in \eqref{eq:min-max-final}. Of course our conjecture constitutes a crucial part of the optimality proof for $T_*$. Even though we cannot support our claim analytically we intend to provide \textit{numerical evidence} for its validity by directly computing $f_\lambda(r_*)$ and examining its sign. To achieve this goal we develop a simple computational method by borrowing ideas from \cite{Moustakides}. In fact, as we will see next, the expressions for $f_0(R)$ and $f_\lambda(R)$ proposed in \eqref{eq:f0R} and \eqref{eq:flambda} respectively are properly set for the numerical computation of the two functions.

\subsection{Numerical Method}
For evaluating numerically $f_0(R)$ and $f_\lambda(R)$ we need to compute integrals of the form $\int_{\alpha}^{\beta} a(x)d\big(b(x)\big)$ where $a(x),b(x)$ are function of $x$ and $d\big(b(x)\big)=b'(x)dx$ denotes the differential of $b(x)$. If we sample the interval $[\alpha,\beta]$ (not necessarily canonically) at the points $\alpha=x_0<x_1<\cdots<x_N=\beta$ then using the simple trapezoidal rule we can approximate the corresponding integral by the following sum
\begin{multline}
\int_{\alpha}^{\beta} a(x)d\big(b(x)\big)\approx\sum_{n=1}^N\frac{a(x_n)+a(x_{n-1})}{2}\big(b(x_n)-b(x_{n-1})\big)\\
=\frac{b(x_1)-b(x_0)}{2} a(x_0)+\sum_{n=1}^{N-1} \frac{b(x_{n+1})-b(x_{n-1})}{2} a(x_n)+\frac{b(x_N)-b(x_{N-1})}{2} a(x_N).
\label{eq:approx-int}
\end{multline}
The last sum in \eqref{eq:approx-int} can be clearly written as the inner product $\mathbf{b}^t\mathbf{a}$ of the two vectors
\begin{align*}
\mathbf{a}&=[a(x_0),a(x_1),\ldots,a(x_N)]^t\\
\mathbf{b}&=\frac{1}{2}[b(x_1)-b(x_0),b(x_2)-b(x_0),\ldots,b(x_N)-b(x_{N-2}),b(x_N)-b(x_{N-1})]^t.
\end{align*}
This straightforward idea can be applied in \eqref{eq:det-rstar-final} for the computation of the corresponding integral and the evaluation of the function $f_0(r_*)$ for any given $r_*$. Furthermore, with the help of an elementary bisection method we can then easily approximate the root of the equation $f_0(r_*)=0$ and obtain the initializing point $R_0=r_*$ of our test statistic $R_t$.

Once $r_*$ is specified we can attempt to solve the integral equation \eqref{eq:flambda} in order to compute the function\footnote{It is more convenient to compute $f_\lambda(R^{-1})$ since it is the actual function used in the corresponding integrals.} $f_\lambda(R)$. We first sample the interval\footnote{We must avoid the value $R=0$ because it is the source of numerical instability. We can instead select a point which is sufficiently close to 0 but does not lead to the product of a very large with a very small number (which is the source of the observed instability).} $(0,r_*+\gamma]$ at a sufficient number of points.  Among our sampled values we must include $r_*$ since we are interested in (the sign of) $f_\lambda(r_*)$. Call $\mathbf{f}_\lambda$ the vector version of the samples of $f_\lambda(R)$ and $\mathbf{f}_0$ the corresponding vector for the samples of $f_0(R)$. In the sampled version of the integral equation \eqref{eq:flambda} if we approximate the three integrals using the idea proposed in \eqref{eq:approx-int} we end up with the following system of linear equations
$$
\mathbf{f}_\lambda=\mathbf{f}_0-\lambda\mathbf{P}\mathbf{f}_\lambda.
$$
Matrix $\mathbf{P}$ summarizes the contribution of the three integrals which use the function $f_\lambda(R)$. The reason we need a matrix (and not a vector) is because we evaluate \eqref{eq:flambda} for the complete collection of samples of $R$. Each sample requires its own vector $\mathbf{b}$ which contributes a row to the matrix $\mathbf{P}$. It is clear that the product $\mathbf{P}\mathbf{f}_\lambda$ evaluates the sum of the three integrals for all sampled values of $R$ at the same time. Solving for $\mathbf{f}_\lambda$ yields
\begin{equation}
\mathbf{f}_\lambda=(\mathbf{I}+\lambda\mathbf{P})^{-1}\mathbf{f}_0.
\label{eq:system-f}
\end{equation}
From the solution vector $\mathbf{f}_\lambda$ we only need to retain the term corresponding to $f_\lambda(r_*)$. We note that $\mathbf{f}_0,\mathbf{P}$ must be computed only once, since they do not depend on $\lambda$. By changing the value of the scalar $\lambda$ we can then find $f_\lambda(r_*)$ for different values of this parameter and examine its sign to verify the validity of \eqref{eq:opt-cond-final}.

\section{EXAMPLES}
Let us apply the numerical method we introduced above to the case where the average false alarm period takes the values $\gamma=5\text{ and }20$. The next two figures depict our numerical results. 

In Figure~\ref{fig:1}(a) we plot $f_0(r_*)$ as a function of $r_*$ for $\gamma=5$. For the computation of the integral in \eqref{eq:det-rstar-final} we used 501 samples in the interval $[r_*,r_*+\gamma]$. The bisection method estimated the root of $f_0(r_*)=0$ to be $r_*=1.0707$. This is the value we adopted for this parameter. For the computation of $f_\lambda(R)$ we sampled the interval $[2\times10^{-3},r_*+\gamma]$ at 2001 points retaining the 501 we used for the determination of $r_*$. We then solved the linear system in \eqref{eq:system-f} for 100 values of $\lambda$ selected canonically from the interval $(0,10]$. The resulting $f_\lambda(r_*)$ appears in Figure~\ref{fig:1}(b). We can see that this function is clearly negative thus supporting our conjecture.

In Figure~2(a),(b) we present our numerical results for the false alarm value $\gamma=20$. Here the bisection method yielded $r_*=1.5240$. For the computation of $f_0(r_*)$ and $f_\lambda(R)$, we used 1001 and 4001 samples respectively where for the latter case, as before, we sampled the interval $[2\times10^{-3},r_*+\gamma]$. Finally we selected canonically 200 samples for $\lambda$ from the interval $(0,10]$. The resulting function $f_\lambda(r_*)$ is depicted in Figure~2(b) and, as we can see, it is again negative thus supporting, once more, our claim.
\begin{figure}[h!]
\centerline{\includegraphics{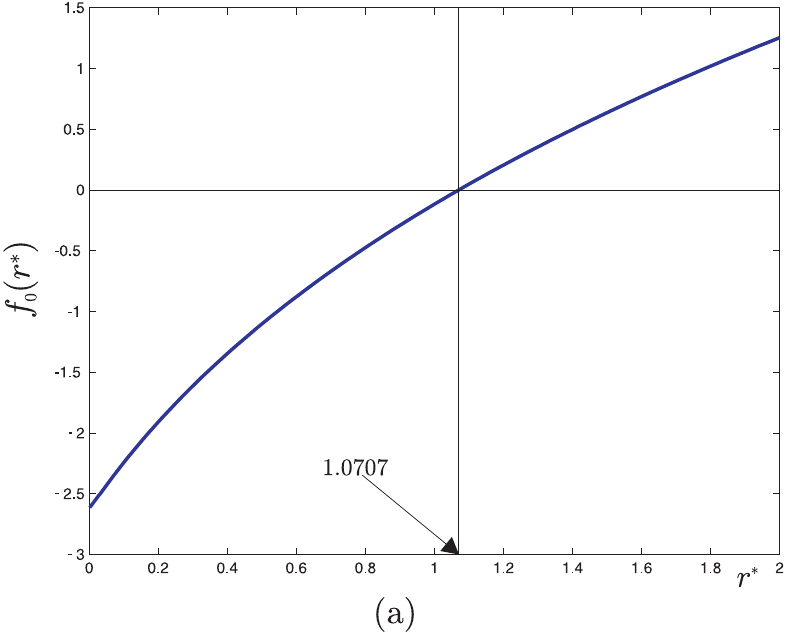}\hfill\includegraphics{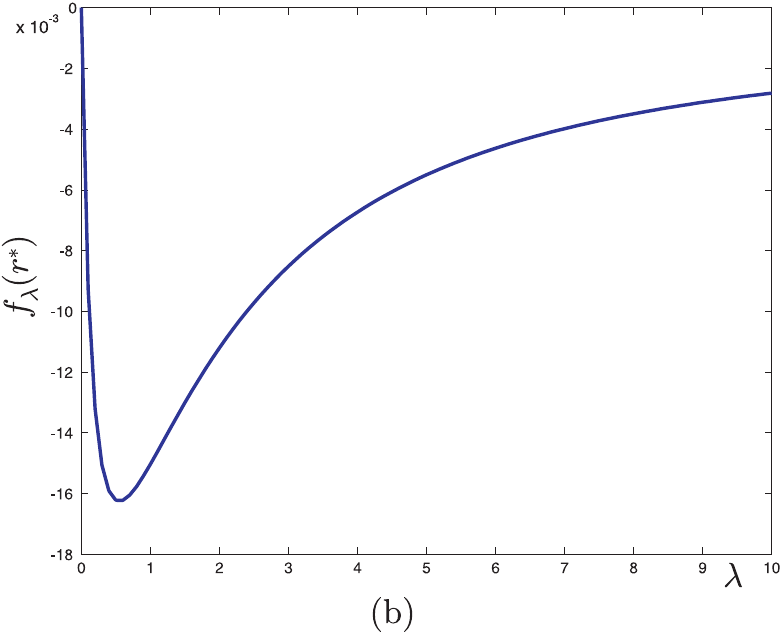}}
\caption{(a) Plot of $f_0(r_*)$ as a function of $r_*$ for $\mu = \sqrt{2}$ and $\gamma=5$. The point at which the function becomes 0 is $r_*=1.0707$. (b) Plot of $f_\lambda(r_*)$ as a function of $\lambda\geq0$.}
\label{fig:1}
\end{figure}
\begin{figure}[h!]
\centerline{\includegraphics{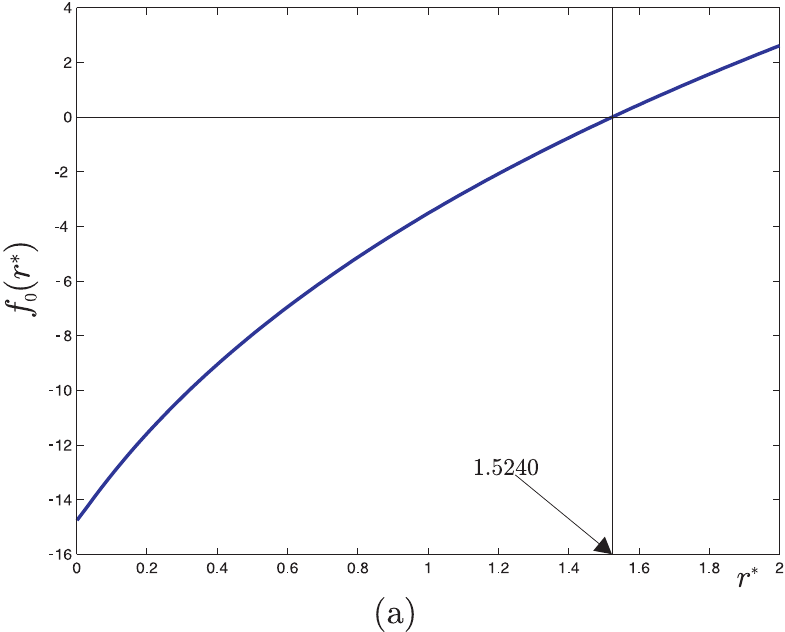}\hfill\includegraphics{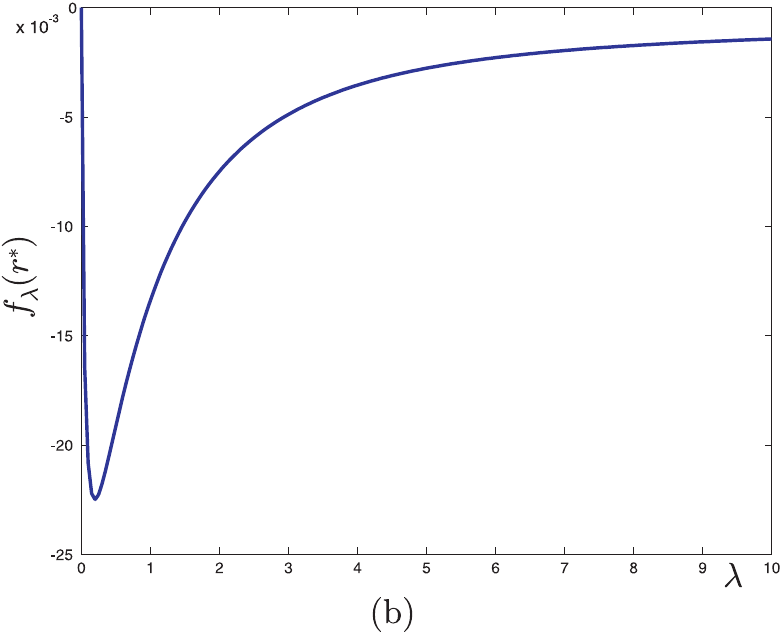}}
\caption{(a) Plot of $f_0(r_*)$ as a function of $r_*$ for $\mu = \sqrt{2}$ and $\gamma=20$. The point at which the function becomes 0 is $r_*=1.5240$. (b) Plot of $f_\lambda(r_*)$ as a function of $\lambda\geq0$.}
\label{fig:2}
\end{figure}

We should mention that we have performed numerous similar computations for various $\gamma$ that ranged from small to large values. In all cases $f_\lambda(r_*)$ turned out to be a negative function of $\lambda$. Of course it is understood that these observations cannot serve, by any means, as a formal proof of optimality for $T_*$. However, finding the proper formulas for the numerical computation demanded a serious mathematical analysis and the final outcome, undeniably, supports our conjecture and the optimality of our detector.

\section{DISCUSSION}
We must point out that the proposed stopping time $T_*$ in \eqref{eq:opt-T} is known as the Shiryaev-Roberts-$r$ (SR-$r$) test and has already been considered in the analysis of Pollak's performance measure, \cite{Pollak}
\begin{equation}
J_{\text{P}}(T)=\sup_{t\geq0}\Exp_t[T-t|T>t].
\label{eq:Pmetric}
\end{equation}
In the same article the following constrained min-max optimization problem was suggested
\begin{equation}
\inf_{T}J_{\text{P}}(T)=\inf_{T}\sup_{t\geq0}\Exp_t[T-t|T>t],~~\text{subject to:}~ \Exp_\infty[T]\geq\gamma>0,
\label{eq:Pollak}
\end{equation}
for the determination of an optimum detection strategy. Pollak was able to prove that the discrete time analog of the stopping time $T_*$ in \eqref{eq:opt-T} is \textit{asymptotically optimum} in a very strong sense, provided that the deterministic starting point $R_0=r_*$ is replaced by a \textit{random variable} which follows the quasi-stationary distribution. More precisely he demonstrated that
$$
J_{\text{P}}(T_*)-\inf_{T}J_{\text{P}}(T)=o(1),~~\text{as}~\gamma\to\infty.
$$
This type of asymptotic solution is called third order\footnote{First order asymptotic optimality is when $J_{\text{P}}(T_*)/\inf_{T}J_{\text{P}}(T)\to1$ and second when $J_{\text{P}}(T_*)-\inf_{T}J_{\text{P}}(T)\leq C<\infty$ uniformly in $\gamma$.} and has the important characteristic that, although the quantities $J_{\text{P}}(T_*)$ and $\inf_{T}J_{\text{P}}(T)$ tend to infinity as $\gamma\to\infty$, their distance  tends to zero. A similar third order asymptotic optimality property was proven by \cite{Tartakovsky} for the SR-$r$ test, namely the analog of $T_*$ in discrete time, with a deterministic and specially designed initialization $r_*$.

In continuous time there exist similar optimality claims. Specifically \cite{Polunchenko} shows that $T_*$ can solve \eqref{eq:Pollak} in the third order sense, when $r_*$ is random and follows the quasi-stationary distribution. This is the continuous time analog of Pollak's result. To obtain the equivalent of \cite{Tartakovsky} conclusions, one must demonstrate that the $T_*$ in \eqref{eq:opt-T} can also enjoy third order asymptotic optimality with proper deterministic initialization. Regarding the initializing value $r_*$ of $R_t$ in continuous time, we can be very precise. Since we are under an asymptotic regime with $\gamma\to\infty$ if we refer to \eqref{eq:det-rstar-final}, divide by $\gamma$ and let $\gamma\to\infty$, we arrive at the equation
$$
1-e^{r_*^{-1}}E_1(r_*^{-1})=0,
$$
From which we compute $r_*=2.299812$. Consequently, the claim is that $T_*$ when initialized with $r_*=2.299812$, becomes a third order asymptotic solution of the min-max problem defined in \eqref{eq:Pollak}. Unfortunately the proof of this statement is still an open problem.

We note that the Pollak metric in \eqref{eq:Pmetric} does not rely on any prior distribution (for $\tau$). It turns out that we can recover this criterion by considering a generic performance measure of the form $\Exp[T-\tau^+|T>\tau]$ where the prior for $\tau$ is \textit{unknown}. If we follow a worst-case approach over all possible priors then, as it is reported in \cite{Moustakides1}, we recover the Pollak metric. For this general case as we mentioned above, when $R_t$ is initialized with $r_*=2.299812$, the conjecture is that $T_*$ is third order asymptotically optimum.

In our current work we limit ourselves to the family of priors generated by the two-parameter zero-modified exponential density. We assume lack of exact knowledge of these parameters and we follow a worst-case analysis \textit{with respect to the two unknowns}. Since we adopt a significantly smaller class of distributions for the change-time (compared to Pollak's metric) our optimality claim can become stronger: We conjecture \textit{exact} optimality for $T_*$ in the sense that it is the exact solution of the min-max constrained optimization problem proposed in \eqref{eq:minimaxProb}. For $T_*$ to enjoy this optimality property, the initialization parameter $r_*$ must depend on $\gamma$ through equation \eqref{eq:det-rstar-final}. Even though we do not provide a complete analytical proof, we do however supply strong numerical evidence supporting the validity of our conjecture.

\section{APPENDIX}
In all proofs that follow we denote the threshold $r_*+\gamma$ with $A$ in order to simplify our mathematical analysis and the corresponding manipulations.
\vskip0.2cm
\noindent\textbf{Proof of Lemma \ref{lem:1}:} To show \eqref{eq:lem1.1-1} we recall that $\{e^{u_t}\}$ is an $\{\cF_t\}$-martingale with respect to $\Pro_\infty$, consequently $\Exp_\infty[e^{u_t}|\cF_s]=e^{u_s}$ when $t\geq s$. This can be extended to stopping times using Optional Sampling in the sense that
$\Exp_\infty[e^{u_T}|\cF_s]=e^{u_s}$ on the event $\{T\geq s\}$. With the help of this observation we can write
\begin{multline*}
\Exp_\infty[R_T]=\Exp_\infty\left[e^{u_T}\left\{r_*+\int_0^T e^{-u_s}ds\right\}\right]\\
=r_*\Exp_\infty[e^{u_T}]+\int_0^\infty\Exp_\infty\big[ \Exp_\infty[e^{u_T-u_s}|\cF_s]\ind{T>t}\big]ds
=r_*+\int_0^\infty\Exp_\infty[ \ind{T>t}]ds=r_*+\Exp_\infty[T],
\end{multline*}
which proves the desired expression.

For \eqref{eq:lem1.1-2} we use It\^o calculus and observe that under the $\Pro_0$ measure we have the following sde for $R_t$
\begin{equation}
dR_t=\left(2R_t+1\right)dt+\sqrt{2} R_tdw_t,~R_0=r_*.
\label{eq:dR-0}
\end{equation}
while under $\Pro_\infty$ the sde becomes
\begin{equation}
dR_t=dt+\sqrt{2} R_tdw_t,~R_0=r_*.
\label{eq:dR-infty}
\end{equation}
Consider now $dg(R_t)$ under $\Pro_0$, we have
$$
dg(R_t)=\{(2R_t+1)g'(R_t)+R_t^2g''(R_t)\}dt+\sqrt{2} R_tg'(R_t)dw_t
$$
Integrating and taking expectation with respect to $\Pro_t$, since we consider $R_T$ for $\{T>t\}$ we are under the post-change regime, namely $\Pro_0$. This yields
\begin{multline*}
\Exp_t[g(R_T)-g(R_t)|\cF_t]\ind{T>t}=\Exp_t\left[\int_t^T\{(2R_t+1)g'(R_t)+R_t^2g''(R_t)\}dt|\cF_t\right]\ind{T>t}\\
=-\Exp_t[T-t|\cF_t]\ind{T>t}=-\Exp_t[(T-t)^+|\cF_t]
\end{multline*}
where we used the fact that $\{T>t\}$ is $\cF_t$-measurable. We note that
the second equality is true because, as we can verify, $g(R)$ defined in \eqref{eq:fun-g} is the solution of the ode $(2R+1)g'(R)+R^2g''(R)=-1$.

To prove \eqref{eq:lem1.1-3} we observe that\footnote{We note that $\{T>\tau\}=\{T>\tau^+\}$ because $T>0$.}
$$
\Exp_{r,\lambda}[T-\tau^+|T>\tau]=\frac{\Exp_{r,\lambda}[(T-\tau^+)^+]}{\Pro_{r,\lambda}(T>\tau)}.
$$
We consider the numerator and denominator separately. We start with the denominator for which we can write
\begin{multline*}
\Pro_{r,\lambda}(T>\tau)=\Pro_0(T>0)\Pro(\tau\leq0)+
\int_0^\infty\Exp_{r,\lambda}[\ind{T>t}\ind{\tau\in dt}]\\
=\pi+\int_0^\infty\Exp_t[\ind{T>t}]\Pro(\tau\in dt)
=\pi+\int_0^\infty\Exp_t[\ind{T>t}](1-\pi)\lambda e^{-\lambda t}dt.
\end{multline*}
We note that when the time of change is at $\tau=t$, since $\{T>t\}$ is $\cF_t$-measurable, it is a pre-change event. But on $\cF_t$ the probability measure $\Pro_t$ coincides with the nominal $\Pro_\infty$, therefore the previous formula can be modified as follows
\begin{multline}
\Pro_{r,\lambda}(T>\tau)=\pi+(1-\pi)\int_0^\infty\Exp_\infty[\ind{T>t}] \lambda e^{-\lambda t}dt\\
=\pi+(1-\pi)\Exp_\infty\left[\int_0^T\lambda e^{-\lambda t}dt\right]
=\lambda\left\{r+(1-\lambda r)\Exp_\infty\left[\int_0^T e^{-\lambda t}dt\right]\right\}.
\label{eq:denom-final}
\end{multline}

Following a similar line of reasoning for the numerator, we obtain
$$
\Exp_{r,\lambda}[(T-\tau^+)^+]=\pi\Exp_0[T]+(1-\pi)\int_0^\infty\Exp_t[(T-t)^+]\lambda e^{-\lambda t}dt.
$$
Replacing $\Exp_t[(T-t)^+]$ from \eqref{eq:lem1.1-2} yields
\begin{multline}
\Exp_{r,\lambda}[(T-\tau^+)^+]=\\
\lambda\left\{r\Exp_0[g(R_0)-g(R_T)]+(1-\lambda r)\int_0^\infty\Exp_t\left[\big(g(R_t)-g(R_T)\big)\ind{T>t}\right] e^{-\lambda t}dt
\right\}.
\label{eq:numer-final}
\end{multline}
Taking the ratio of the numerator expression \eqref{eq:numer-final} and the expression for the denominator  in \eqref{eq:denom-final} and also recalling that $R_0=r_*$ yields the desired equality. 

To prove the last equality of this lemma, we consider the denominator of $\cD(T,r_*,0)$, normalize it by $\lambda$ and then take the limit as $\lambda\to0$. As we can then see from \eqref{eq:denom-final}, the denominator becomes $r_*+\Exp_\infty[T]$. For the numerator we propose the following alternative way to express $\Exp_t[(T-t)^+]$ that avoids the use of the function $g(R)$
\begin{multline*}
\Exp_t[(T-t)^+]=\Exp_t\left[\int_{t}^\infty\ind{T>t}\ind{T>s}ds\right]
=\int_{t}^\infty\Exp_t[\ind{T>t}\ind{T>s}]ds\\
=\int_{t}^\infty\Exp_\infty\left[\Exp_t[\ind{T>s}|\cF_t]\ind{T>t}\right]ds
=\int_{t}^\infty\Exp_\infty\left[\Exp_\infty[e^{u_s-u_t}\ind{T>s}|\cF_t]\ind{T>t}\right]ds\\
=\int_{t}^\infty\Exp_\infty[e^{u_s-u_t}\ind{T>t}\ind{T>s}]ds
=\Exp_\infty\left[\ind{T>t}\int_{t}^T e^{u_s-u_t}ds\right],
\end{multline*}
where in the forth equality we used \eqref{eq:RadNikDeri} and the fact that $\{T>s\}$ is $\cF_s$-measurable. Normalizing the numerator by $\lambda$ then letting $\lambda\to0$ and using the previous expression, we obtain
\begin{multline}
r_*\Exp_0[T]+\int_0^\infty\Exp_t[(T-t)^+]e^{-\lambda t}dt=
r_*\Exp_\infty\left[\int_{0}^T e^{u_s}ds\right]+
\Exp_\infty\left[
\int_0^T\left(\int_t^T e^{u_s}ds\right)e^{-u_t}dt
\right]\\
=r_*\Exp_\infty\left[\int_{0}^T e^{u_s}ds\right]+
\Exp_\infty\left[
\int_0^T\left(\int_0^s e^{-u_t}dt\right) e^{u_s}ds
\right]\\
=\Exp_\infty\left[\int_{0}^T e^{u_s}\left\{r_* +
\int_0^se^{-u_t}dt\right\} ds
\right]=\Exp_\infty\left[\int_0^T R_sds\right].
\label{eq:numer-normal}
\end{multline}
Dividing the expression for the normalized numerator in \eqref{eq:numer-normal} with the expression for the normalized denominator $r_*+\Exp_\infty[T]$ yields the desired result.
This concludes the proof of the lemma.\qed
\vskip0.2cm

\noindent\textbf{Proof of Lemma \ref{lem:4}:} To prove this lemma we use methodology similar to the one applied in Lemma~\ref{lem:1}. Consider $f_{\lambda}(R)$ to be twice differentiable, then under $\Pro_\infty$ we have
$$
d\big(e^{-\lambda t}f_{\lambda}(R_t)\big)=e^{-\lambda t}\{-\lambda f_{\lambda}(R_t)+g'(R_t)+
R_t^2g''(R_t)\}dt+\sqrt{2}e^{-\lambda t} R_tg'(R_t)dw_t
$$
from which we conclude that
$$
\Exp_\infty[e^{-\lambda T_*}f_{\lambda}(R_{t_*})-f_{\lambda}(R_0)]=
\Exp_\infty\left[
\int_0^{T_*}e^{-\lambda t}\{-\lambda f_{\lambda}(R_t)+g'(R_t)+
R_t^2g''(R_t)\}dt
\right].
$$
We select $f_{\lambda}(R)$ to satisfy the ode
$$
-\lambda f_{\lambda}(R)+f_{\lambda}'(R)+R^2f_{\lambda}''(R)=-\big(g(R)-g(r_*)\big)
$$
and to be bounded in $[0,A]$ with the boundary condition $f(A)=0$. If we substitute in the previous equality, after recalling that $R_0=r_*$ and $R_{T_*}=A$, we prove the desired result.\qed
\vskip0.2cm

\noindent\textbf{Proof of Lemma \ref{lem:5}:} We have that the function $f_0(R)$ satisfies the ode
\begin{equation}
f_0'(R) + R^2 f_0''(R) = 
e^{R^{-1}}E_1(R^{-1})-e^{r_*^{-1}}E_1(r_*^{-1}),
\label{eq:ode-f0}
\end{equation}
and it is bounded for $R\in[0, A]$ with $f_0(A)=0$. With direct substitution we can verify that the desired solution has the following form
\begin{multline*}
f_0(R)=\{1-e^{r_*^{-1}}E_1(r_*^{-1})\}(R-A)+\int_{A^{-1}}^{R^{-1}}\frac{e^x}{x}E_1(x)dx\\
=\{1-e^{r_*^{-1}}E_1(r_*^{-1})\}(R-A)+\int_{A^{-1}}^{R^{-1}}E_1(x)d\big(E_i(x)\big)
\end{multline*}

We can apply similar ideas in the differential equation \eqref{eq:funct-f} that defines $f_\lambda(R)$. Multiplying both sides with $e^{-R^{-1}}\frac{1}{R^2}$ yields
$$
\big(e^{-R^{-1}}f_\lambda'(R)\big)'=-\lambda e^{-R^{-1}}\frac{1}{R^2}f_\lambda(R)+\big(e^{-R^{-1}}f_0'(R)\big)'
$$
where for the last term we used the ode in \eqref{eq:ode-f0} that defines $f_0(R)$. Integrating both sides, we obtain
\begin{multline*}
f_\lambda(R)=f_0(R)-\lambda\int_{R}^{A}e^{x^{-1}}\left(\int_0^x\frac{e^{-z^{-1}}}{z^2}f_\lambda(z)dz\right)dx\\
=f_0(R)-\lambda\int_{A^{-1}}^{R^{-1}}\frac{e^x}{x^2}\left(\int_x^\infty e^{-z}f_\lambda(z^{-1})dz\right)dx\\=f_0(R)-\lambda\Bigg\{
\int_{A^{-1}}^{R^{-1}}e^{-z}f_\lambda(z^{-1})\left(\int_{A^{-1}}^z\frac{e^x}{x^2}dx\right)dz
+\left(\int_{A^{-1}}^{R^{-1}}\frac{e^x}{x^2}dx\right)\int_{R^{-1}}^\infty e^{-z}f_\lambda(z^{-1})dz\Bigg\},
\end{multline*}
where the second equality is the result of applying the change of variables $x\to x^{-1}$ and $z\to z^{-1}$ and the third is obtained by changing the order of integration in the double integral combined with careful housekeeping of the integration regions. The next step is to observe that the indefinite integral of $\frac{e^x}{x^2}$ is equal to $E_i(x)-\frac{e^x}{x}$. This applied in the previous expression yields
\begin{multline*}
f_\lambda(R)=f_0(R)\\
-\lambda\Bigg\{
\left(E_i(A^{-1})-e^{A^{-1}}A\right)\int_{A^{-1}}^{R^{-1}}f_\lambda(z^{-1})d(e^{-z})
+\int_{A^{-1}}^{R^{-1}}f_\lambda(z^{-1})\big(e^{-z}E_i(z)-z^{-1}\big)dz\\
+\left(E_i(A^{-1})-e^{A^{-1}}A-E_i(R^{-1})+e^{R^{-1}}R\right)
\int_{R^{-1}}^\infty f_\lambda(z^{-1})d(e^{-z})
\Bigg\}.
\end{multline*}
Combining terms and observing that the indefinite integral of $e^{-z}E_i(z)-z^{-1}$ is $-e^{-z}E_i(z)$, yields
\begin{multline*}
f_\lambda(R)=f_0(R)
-\lambda\Bigg\{
\left(E_i(A^{-1})-e^{A^{-1}}A-E_i(R^{-1})+e^{R^{-1}}R\right)\int_{A^{-1}}^{\infty}f_\lambda(z^{-1})d(e^{-z})\\
-\int_{A^{-1}}^{R^{-1}}f_\lambda(z^{-1})d\big(e^{-z}E_i(z)\big)
+\left(E_i(R^{-1})-e^{R^{-1}}R\right)
\int_{A^{-1}}^{R^{-1}} f_\lambda(z^{-1})d(e^{-z})
\Bigg\},
\end{multline*}
which is the final expression. This concludes the proof of Lemma~\ref{lem:5}.\qed

\section*{ACKNOWLEDGEMENT}
This work was supported by the US National Science Foundation under Grant CIF\,1513373, through Rutgers University.

%\newpage

\end{document}